\font\smallit=cmti10
\renewcommand\section{\@startsection {section}{1}{\z@}
{-30pt \@plus -1ex \@minus -.2ex}
{2.3ex \@plus.2ex}
{\normalfont\normalsize\bfseries\boldmath}}
\renewcommand\subsection{\@startsection{subsection}{2}{\z@}
{-3.25ex\@plus -1ex \@minus -.2ex}
{1.5ex \@plus .2ex}
{\normalfont\normalsize\bfseries\boldmath}}
\renewcommand{\@seccntformat}[1]{\csname the#1\endcsname. }
\newtheorem{theorem}{Theorem}
\newtheorem{lemma}{Lemma}
\newtheorem{proposition}{Proposition}
\newtheorem{corollary}{Corollary}
\theoremstyle{definition}
\newtheorem{remark}{Remark}
\newtheorem{example}{Example}
\newcommand{\coeff}{\mathrm{coeff}}
\newcommand{\ee}{\mathrm{e}}
\renewcommand{\AA}{\mathtt{A}}
\newcommand{\BB}{\mathtt{B}}
\newcommand{\QQ}{\mathbb{Q}}
\newcommand\nA{\mathcal{A}}
\newcommand{\denom}{\mathrm{denom}}
\newcommand{\lcm}{\mathrm{lcm}}
\begin{document}

%Next comes your title and author list information in the following construct:

\begin{center}
\uppercase{\bf Denominators of coefficients of the Baker--Campbell--Hausdorff series}
\vskip 20pt
{\bf Harald Hofst\"atter}\\ %\footnote{any footnote here}}\\
{\smallit Reitschachersiedlung 4/6, 7100 Neusiedl am See, Austria}\\
%{\smallitDepartment of Mathematics, One University, Somewhere, State 00000, Country}\\
{\tt hofi@harald-hofstaetter.at}\\ %(optional)
%\vskip 10pt
%{\bf Author Two\footnote{any footnote here}}\\
%{\smallit Department of Mathematics, University of Two, Anywhere AAAAA, Country}\\
%{\tt you@math.two.edu}\\ (optional)
\end{center}
\vskip 20pt

%\centerline{\smallit Received: , Revised: , Accepted: , Published: } % We will fill in the dates %TODO: uncomment for submission
%\vskip 30pt

%Followed by your abstract in the following construct:

\centerline{\bf Abstract}

\noindent
%Put your abstract here. Please limit it to half of a page of text.
For the computation %of the homogeneous components 
of terms
of the Baker--Campbell-Hausdorff series $H = \log(\ee^{\AA}\ee^{\BB})$ 
some a priori knowledge about the denominators of the coefficients of 
the series can be beneficial.
In this paper an explicit formula for the computation of 
 common denominators 
for the rational coefficients of the homogeneous components of the series is derived.
Explicit computations up to degree 30 show that  the common
denominators obtained by this formula are as small as possible, which suggests
that the formula is in a sense optimal. % in a certain way. % some ways.
The sequence of integers defined by the formula seems to be interesting also from a number-theoretic point of view. There is, e.g., a connection with the
denominators of the Bernoulli numbers and the Bernoulli polynomials.

\pagestyle{myheadings}
%\markright{\smalltt INTEGERS: 20 (2020)\hfill} %TODO: uncomment for submission
\thispagestyle{empty}
\baselineskip=12.875pt
\vskip 30pt

\section{Introduction}
We consider the 
Baker--Campbell--Hausdorff (BCH) series which is formally defined as
the element 
\begin{equation*}%\label{eq:BCH_element}
H = \log(\ee^{\AA}\ee^{\BB})
= \sum_{k=1}^\infty\frac{(-1)^{k+1}}{k}\big(\ee^{\AA}\ee^{\BB}-1\big)^k
= \sum_{k=1}^\infty\frac{(-1)^{k+1}}{k}\bigg(\sum_{p+q>0}\frac{1}{p!q!}\AA^p\BB^p\bigg)^k %=\sum_{w\in\{\AA,\BB\}^*} g_w w
\end{equation*}
in the ring $\QQ\langle\langle\AA,\BB\rangle\rangle$  of formal power series 
in the non-commuting variables $\AA$ and $\BB$ with rational coefficients.
A classical result known as the
Baker--Campbell--Hausdorff theorem states that % the BCH series 
$H$
is  a \emph{Lie series}, i.e.,  a sum $H=\sum_{n=1}^{\infty}H_n$
of homogeneous components $H_n$ which 
can be written as linear combinations of 
$\AA$ and $\BB$ and (possibly nested) 
 commutator terms in $\AA$ and $\BB$. For an accessible proof of the BCH theorem see,
 e.g., 
 \cite{Eichler} or \cite{HHproof}.
  
The algorithmic determination of the homogeneous components $H_n$  turns out 
to be quite nontrivial,
especially if the $H_n$ are to  be represented as  linear combinations of 
linearly independent commutators.
% A possible approach to this task was given by 
%K. Goldberg \cite{G} who presented an algorithm for the computation of
The determination of the coefficients $h_w\in\QQ$ %=\coeff(w, H)\in\QQ$ 
%of all words 
%$w\in\{\AA,\BB\}^n$
in the representations 
\begin{equation*}
H_n= \sum_{w\in\{\AA,\BB\}^n} h_w w,\quad n=1,2,\dots
\end{equation*}
%of the homogeneous components %of the BCH series % and its homogeneous components  
%as a word series 
can be an
important  first step for this task.
Here, $\{\AA,\BB\}^n$
denotes the finite set of all words $w$ of length (degree) $|w|=n$ over the 
alphabet $\{\AA, \BB\}$.
%(including the empty word $1$ for which $h_1=\coeff(1,H)=0$)
%and $\{\AA,\BB\}^n$ denotes the (finite) set of all such words of length (degree) $n$.
%Similarly, $\{\AA,\BB\}^n$ denotes the (finite) set of all such words of length $n$, such that the homogeneous component $H_n$ of degree $n$ %of $H=\sum_{n=1}^{\infty}H_n$
%can be written as 
%\begin{equation*}
%H_n = \sum_{w\in\{\AA,\BB\}^{n}} h_w w.
%\end{equation*}
It would be  beneficial if some a priori information about the denominators
of the coefficients $h_w=\coeff(w,H)$ 
would be available, 
which would allow to compute a common denominator valid for all $h_w\in\{\AA,\BB\}^n$, because it would  then  be possible to compute these
coefficients
using pure integer arithmetic rather than less efficient rational
arithmetic (cf.~\cite{HHfast}). In this context, K.~Goldberg stated
in the penulti\-mate paragraph of \cite{G}:
%In \cite{G} K.~Goldberg presented a recursive procedure for computing 
%%Referring to the computation 
% these coefficients $h_w$, which therefore are sometimes referred to as
%{\it Goldberg coefficients}. In the penultimate paragraph of this paper
%Goldberg  stated:
\begin{quote}
[T]he chief difficulty is that computation with rationals is unavoidable until
some idea of the factorization of the denominators of the coefficients is known.
However for the small degrees, $n\leq 10$, all the denominators for the same
degree $n$ divide the denominator of $(B_{n-1}+B_{n-2})/n!$ and this may be the 
general case.
\end{quote}
Here $B_n$ denote the Bernoulli numbers.
Unfortunately, already for degree $n=11$ the  denominator of the given formula is
not a valid common  denominator for all  coefficients corresponding to  this degree.\footnote{Also for $n\leq 3$ the
given formula is not entirely correct and has to be properly interpreted.}
 Indeed, for the word $w=\AA^8\BB^3\in\{\AA,\BB\}^{11}$
we have\footnote{This value can for example be looked up in the table
given in \cite{NewmanThompson}.}
$\!h_w=1/1247400$ and $(B_{10}+B_9)/11!=1/526901760$ but 
 $526901760/1247400=2212/5\not\in\mathbb{Z}$.
However, %for each degree $n\geq 1$ 
a valid such common denominator
 is given by the following theorem, which is the main result of the present paper.
\begin{theorem}\label{Thm:MainTheorem}
For $n\geq 1$ define
%$D_n=n!d_n$ with 
\begin{equation}\label{eq:d_n}
d_n = \prod_{p\ \mathrm{prime}, \ p<n}p^{\max\{t:\  p^t\leq s_p(n)\}}, 
\end{equation}
where $s_p(n)=\alpha_0+\alpha_1+\ldots+\alpha_r$ is  the sum of
the digits in the $p$-adic expansion  $n=\alpha_0+\alpha_1p+\ldots+\alpha_rp^r$.
Then $n!\,d_n$ is a valid common denominator for all coefficients of words
of length $n$ in the Baker--Campbell--Hausdorff series $H=\log(\ee^{\AA}\ee^{\BB})$, or, equivalently,\footnote{Formally, we define $\denom(r)$
for $r\in\mathbb{Q}$ as the smallest
positive integer $d$ such that $r\cdot d\in\mathbb{Z}$.
In particular, $\denom(0)=1$.}
%Then for every word $w$ of length $n$, the denominator of the coefficient
%of $w$ in the Baker--Campbell--Hausdorff series 
%$H=\log(\ee^{\AA}\ee^{\BB})$ %=\sum_{w}h_ww$
%%is a divisor of
%divides $D_n=n!\,d_n$,
\begin{equation*}
\mathrm{denom}(\coeff(w, H))\ | \ n!\,d_n,\quad w\in\{\AA,\BB\}^n.
\end{equation*}
\end{theorem}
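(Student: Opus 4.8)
\noindent\emph{Proof idea.}
The plan is to establish, separately for each prime $p$, the stronger fact that \emph{every individual term} in the expansion of $H$ becomes $p$-integral after multiplication by $n!\,d_n$; the bound on $\denom(\coeff(w,H))$ then follows at once by summing. Writing $v_p$ for the $p$-adic valuation, I would start from
\[
\coeff(w,H)=\sum_{k\ge 1}\frac{(-1)^{k+1}}{k}\sum_{\AA^{p_1}\BB^{q_1}\cdots\AA^{p_k}\BB^{q_k}=w}\ \prod_{i=1}^{k}\frac{1}{p_i!\,q_i!},
\]
where the inner sum runs over all factorizations of the length-$n$ word $w$ into $k$ blocks $\AA^{p_i}\BB^{q_i}$ with $p_i+q_i\ge 1$ (so $1\le k\le n$). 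Set $e_p:=\max\{t:p^t\le s_p(n)\}=\lfloor\log_p s_p(n)\rfloor$; this is $v_p(d_n)$ when $p<n$ and vanishes together with $v_p(d_n)$ when $p\ge n$, so it is enough to prove $v_p\big(n!\,d_n\cdot(\text{term})\big)\ge 0$ for each term.

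Fix one factorization. Because $n!/\prod_i p_i!\,q_i!=\binom{n}{p_1,q_1,\dots,p_k,q_k}$ is an integer, Legendre's formula $v_p(m!)=(m-s_p(m))/(p-1)$ shows that
\[
C:=v_p\binom{n}{p_1,q_1,\dots,p_k,q_k}=\frac{S-s_p(n)}{p-1}\in\mathbb{Z}_{\ge 0},\qquad S:=\sum_{i=1}^{k}\big(s_p(p_i)+s_p(q_i)\big),
\]
the nonnegative integer $C$ being the number of carries produced when the parts are added in base $p$. A direct substitution then gives
\[
v_p\big(n!\,d_n\cdot(\text{term})\big)=C+e_p-v_p(k),
\]
so the entire theorem reduces to the single elementary inequality $C+e_p\ge v_p(k)$.

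To prove this inequality I would rely on two observations. First, each block has $p_i+q_i\ge 1$, hence $s_p(p_i)+s_p(q_i)\ge 1$, so $S\ge k\ge p^{\,v_p(k)}$. Second, $s_p(n)\le p^{e_p+1}-1$ by the definition of $e_p$. If $v_p(k)\le e_p$ the claim is immediate from $C\ge 0$. If $v_p(k)=j\ge e_p+1$, then $C\ge (p^{j}-s_p(n))/(p-1)$; for $j\ge e_p+2$ a short computation shows this bound already exceeds $j-e_p$, whereas in the borderline case $j=e_p+1$ it is only $\ge 1/(p-1)>0$. The decisive step—and the one place where a purely fractional estimate would break down for odd $p$—is that $C$ is an \emph{integer}, so $C>0$ forces $C\ge 1=j-e_p$, settling the borderline case and completing the argument. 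I expect this integrality to be the conceptual crux: it is precisely the interpretation of the factorial excess as an integer carry count that upgrades the fractional bound to the required value exactly at $k=p^{e_p+1}$, which is also what forces the exponent $\max\{t:p^t\le s_p(n)\}$ in $d_n$ and accounts for the observed optimality of the formula.
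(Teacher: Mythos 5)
Your proposal is correct and follows essentially the same route as the paper: after reducing to a per-term $p$-adic estimate via Legendre's formula, the decisive step in both arguments is the inequality $v_p(k)\le e_p+\frac{S-s_p(n)}{p-1}$, proved by the same geometric lower bound on the carry count together with the same integrality upgrade at the borderline case $v_p(k)=e_p+1$ (the paper's Lemma~\ref{Lemma:min_h_2} and display~(\ref{eq:v_multinom})). The only differences are cosmetic — you work term-by-term on the raw expansion instead of routing through the $\lcm$-based Lemma~\ref{Lemma:denomcoeff} and the quantity $D_n$ of Proposition~\ref{Prop:D_n}, and you omit the tightness construction of Lemma~\ref{Lemma:min_h_1}, which is not needed for the divisibility statement itself.
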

\begin{remark}The theorem can be extended to the case of $K\geq 2$ exponentials,
\begin{equation*}
\mathrm{denom}(\coeff(w, \log(\ee^{\AA_1}\!\cdots\ee^{\AA_K})))\ | \ n!\,d_n,\quad w\in\{\AA_1,\dots\AA_K\}^n.
\end{equation*} 
%involving $K\geq 2$ exponentials.
Our proof of Theorem~\ref{Thm:MainTheorem}  
in Section~\ref{Sec:Proof} will cover this more general case as well.
\end{remark}
An explicit computation \cite{HHfast} yields\footnote{Here $\lcm\,\mathcal{M}$ denotes the {\em least common multiple} of the elements of the finite set $\mathcal{M}\subset\mathbb{Z}$.}
\begin{equation*}
\mathrm{lcm}\big\{\mathrm{denom}(\coeff(w,H)): w\in\{\AA,\BB\}^n
%\setminus\{\AA^n,\BB^n\}
\big\}=n!\,d_n,\quad n=1,2,\dots,30,
\end{equation*}
which shows that at least for $n\leq 30$
the common denominators $n!\,d_n$ are as small as possible.
The first few values of $d_n$ are
\begin{equation*}
d_n = 1, 1, 2, 1, 6, 2, 6, 3, 10, 2, 6, 2, 210, 30, 12, 3, 30, 10, 210, 42, 330, 30, 60, 30, 546, \ldots. % 42, 28, 2, 60, 4]
\end{equation*}
A search in the Online Encyclopedia of Integer Sequences \cite{SloaneOEIS} does not (yet)  result in
a match for $\{d_n\}$, but remarkably there is a near match, namely the sequence A195441,
\begin{equation*}
\widetilde{d}_n = 1, 1, 2, 1, 6, 2, 6, 3, 10, 2, 6, 2, 210, 30, 6, 3, 30, 10, 210, 42, 330, 30, 30, 30, 546, \dots,
\end{equation*}
which is  investigated in \cite{KellnerSondow}.\footnote{In \cite{KellnerSondow}
the sequence $\{\widetilde{d}_n\}$ is denoted $\{q_n\}$ and 
starts with index $n=0$ such that $\widetilde{d}_n = q_{n-1}$, $n=1,2,\dots$.}
For $n\leq 25$ we have $d_n\neq\widetilde{d}_n$ only for $n=15$ and  $n=23$.
As it turns out, 
$\widetilde{d}_n$ is the square-free kernel of $d_n$,
\begin{equation*}
\widetilde{d}_n = 
\prod_{p|d_n}p =
\prod_{p<n:\ s_p(n)\geq p} p,
\end{equation*}
and there is a connection with  the Bernoulli numbers and the Bernoulli polynomials,
\begin{equation*}
\widetilde{d}_n = \mathrm{denom}(B_n(x)-B_n), 
\end{equation*}
see \cite{KellnerSondow}.

The rest of the paper is organized as follows. First, still in this introduction, 
we prove two corollaries to Theorem~\ref{Thm:MainTheorem} which for 
special degrees $n$ provide information about the numerators
of the BCH coefficients $h_w$. Then we give an example in which 
the results of Theorem~\ref{Thm:MainTheorem} and Corollary~\ref{Cor:n_prime}
are verified by explicit computations. 
Our  proof of  Theorem~\ref{Thm:MainTheorem} is naturally divided into a combinatorial and a number-theoretical part. The combinatorial part is given
in Section~\ref{Sec:Preliminaries} and leads to a preliminary result in 
Proposition~\ref{Prop:D_n}. Based on this preliminary result we finally
prove Theorem~\ref{Thm:MainTheorem} in the number-theoretical part in
Section~\ref{Sec:Proof} .

\begin{corollary}\label{Cor:n_prime}
Let $p\geq2$ prime, and let $w\in\{\AA,\BB\}^p\setminus\{\AA^p, \BB^p\}$ be a word of length $p$ different from 
$\AA^p$ and $\BB^p$. If the coefficient  of $w$ in the Baker--Campbell--Hausdorff series 
$H=\log(\ee^{\AA}\ee^{\BB})$
 is written with denominator
$p!\,d_p$, 
\begin{equation*}
h_w=\coeff(w, H)=\frac{a_w}{p!\,d_p},
\end{equation*}
then the numerator $a_w\in\mathbb{Z}$  satisfies
\begin{equation*}
a_w \equiv - d_p \ (\mathrm{mod}\ p).
\end{equation*}
\end{corollary}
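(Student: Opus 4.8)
The plan is to read off $h_w=\coeff(w,H)$ directly from the logarithmic series and then to track the single prime $p$ through the computation. Setting $M=\ee^{\AA}\ee^{\BB}-1=\sum_{(a,b)\neq(0,0)}\frac{1}{a!\,b!}\AA^a\BB^b$ and extracting the coefficient of a word $w$ of length $p$ from $H=\sum_{k\geq1}\frac{(-1)^{k+1}}{k}M^k$, each power $M^k$ contributes exactly those factorizations $w=\AA^{a_1}\BB^{b_1}\cdots\AA^{a_k}\BB^{b_k}$ with all $(a_i,b_i)\neq(0,0)$. This gives
\begin{equation*}
h_w=\sum_{k=1}^{p}\frac{(-1)^{k+1}}{k}\,c_{w,k},\qquad c_{w,k}=\sum\prod_{i=1}^{k}\frac{1}{a_i!\,b_i!},
\end{equation*}
where the inner sum runs over all such factorizations of $w$ into $k$ syllables, and the outer sum stops at $k=p$ because each syllable uses at least one letter.

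Next I would determine which summands fail to be $p$-integral, writing $v_p$ for the $p$-adic valuation. The factor $1/k$ is non-integral only for $k=p$, since $1\leq k\leq p$. The factor $\prod 1/(a_i!\,b_i!)$ is non-integral only if some $a_i$ or $b_i$ is at least $p$; but $\sum_i(a_i+b_i)=p$ forces this to occur precisely when $k=1$ and $w\in\{\AA^p,\BB^p\}$, which is excluded by hypothesis. Hence for $w\neq\AA^p,\BB^p$ every summand with $k<p$ is $p$-integral, and the only non-integral contribution is the one with $k=p$. For $k=p$ the degree constraint forces each syllable to be a single letter, so there is a unique factorization with $c_{w,p}=1$, contributing $\frac{(-1)^{p+1}}{p}$. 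Therefore
\begin{equation*}
h_w=\frac{(-1)^{p+1}}{p}+R_w,\qquad v_p(R_w)\geq0,
\end{equation*}
so that $p\,h_w\equiv(-1)^{p+1}\pmod p$ as $p$-adic integers.

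Finally I would combine this with Theorem~\ref{Thm:MainTheorem}. Since $d_p$ is a product of primes strictly below $p$ we have $\gcd(d_p,p)=1$, and from $a_w=h_w\,p!\,d_p=(p\,h_w)\,(p-1)!\,d_p$ together with Wilson's congruence $(p-1)!\equiv-1\pmod p$ I obtain
\begin{equation*}
a_w\equiv(-1)^{p+1}\cdot(-1)\cdot d_p=(-1)^{p}d_p\equiv-d_p\pmod p,
\end{equation*}
the last step using $(-1)^{p}\equiv-1\pmod p$ for every prime $p$, which also absorbs the case $p=2$. The main obstacle is the $p$-integrality bookkeeping of the second paragraph: one must check carefully that $1/k$ supplies a factor $p$ in the denominator only for $k=p$ and that no factorial $a_i!$ or $b_i!$ does so unless $w$ is a pure power. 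This is exactly where the exclusion of $\AA^p$ and $\BB^p$ is essential, since for those words the single-syllable term $1/p!$ furnishes a second non-integral contribution that cancels the $k=p$ term (consistent with $h_{\AA^p}=h_{\BB^p}=0$ for $p\geq2$).
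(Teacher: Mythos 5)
Your proof is correct, and it reaches the paper's conclusion by a more self-contained route. The paper's own proof is two lines: it cites Van-Brunt and Visser for the key decomposition $h_w=\frac{1}{p}+C_w$ with $C_w$ having denominator prime to $p$, and then applies Wilson's theorem exactly as you do in your last paragraph. You instead derive that decomposition from first principles by expanding $\log(1+M)$ with $M=\ee^{\AA}\ee^{\BB}-1$ and tracking the prime $p$: the only possible sources of $p$ in a denominator are the factor $1/k$ (non-integral only at $k=p$) and the inverse factorials $1/(a_i!\,b_i!)$ (non-integral only when some exponent reaches $p$, which the degree constraint $\sum_i(a_i+b_i)=p$ reduces to the excluded words $\AA^p,\BB^p$), while the $k=p$ term is exactly $(-1)^{p+1}/p$ because the unique factorization into $p$ nonempty $\AA^a\BB^b$-syllables is letter by letter. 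This bookkeeping is sound, and your version is in fact slightly more careful than the form cited in the paper regarding the sign at $p=2$: you obtain $(-1)^{p+1}/p$ rather than $1/p$, and correctly note that $(-1)^p\equiv-1\pmod p$ absorbs the even prime. What your approach buys is independence from the external reference (essentially reproving the relevant part of Van-Brunt--Visser's Section IV.A); what the paper buys is brevity. The final step, $a_w=(p\,h_w)(p-1)!\,d_p\equiv(-1)^{p+1}(-1)\,d_p\equiv-d_p\pmod p$ via Wilson's theorem together with $\gcd(d_p,p)=1$, is the same in both arguments.
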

\begin{proof}
In \cite[Section~IV.A]{VanBruntVisser} it is shown that for $w\in\{\AA,\BB\}^p\setminus\{\AA^p, \BB^p\}$ the coefficient $h_w$  can be written as
\begin{equation*}
h_w = \frac{1}{p} + C_w,
\end{equation*}
where $C_w$ is a rational number whose denominator is not divisible by $p$. It follows
\begin{equation*}
a_w = \left(\frac{1}{p} + C_w\right)\, p!\,d_p 
\equiv (p-1)!\,d_p
\equiv -d_p \ (\mathrm{mod} \ p),
\end{equation*}
where in the last step we used Wilson's theorem.
\end{proof}
\begin{corollary}
Let $n=p+1$ with $p\geq 3$ an odd prime.
Define the exceptional set $$\mathcal{Z}_{p+1}=\{\AA\BB^p, \BB^p\AA, \AA^p\BB, \BB^p\AA\}\cup\{w=w_1\!\cdots w_{p+1}\in\{\AA,\BB\}^{p+1}: \ w_1=w_{p+1}\}.$$ %\subset\{\AA,\BB\}^{p+1}.$$ 
Let $w\in\{\AA,\BB\}^{p+1}\setminus\mathcal{Z}_{p+1}$. 
If the coefficient  of $w$ in the Baker--Campbell--Hausdorff series  
$H=\log(\ee^{\AA}\ee^{\BB})$
is written with denominator
$(p+1)!\,d_{p+1}$, 
\begin{equation*}
h_w=\coeff(w, H)=\frac{a_w}{(p+1)!\,d_{p+1}},
\end{equation*}
then the numerator $a_w\in\mathbb{Z}$  satisfies
\begin{equation*}
a_w \equiv \frac{p-1}{2}\,d_{p+1} \ (\mathrm{mod}\ p).
\end{equation*}
If, on the other hand, $w\in\mathcal{Z}_{p+1}$, then  $\coeff(w,H)=0$.
\end{corollary}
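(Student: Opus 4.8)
The plan is to isolate the $p$-singular part of each coefficient from its $p$-integral part, as in the proof of Corollary~\ref{Cor:n_prime}, and to treat the congruence and the vanishing separately.

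First note that $p\nmid d_{p+1}$: for $p\ge 3$ the base-$p$ expansion of $p+1$ is $1+1\cdot p$, so $s_p(p+1)=2<p$ and the exponent of $p$ in \eqref{eq:d_n} is $\max\{t:p^t\le 2\}=0$; thus $d_{p+1}$ is invertible modulo $p$ and $(p+1)!\,d_{p+1}$ contains exactly one factor $p$. I would expand the coefficient straight from the defining series,
\[
h_w=\sum_{k=1}^{p+1}\frac{(-1)^{k+1}}{k}\,F_k(w),\qquad
F_k(w)=\sum\prod_i\frac{1}{p_i!\,q_i!},
\]
the inner sum over all factorizations $w=\AA^{p_1}\BB^{q_1}\cdots\AA^{p_k}\BB^{q_k}$ into $k$ nonempty blocks. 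For $w\notin\mathcal Z_{p+1}$ every maximal run of $w$ has length at most $p-1$—a run of length $p$ or $p+1$ would make $w$ a pure power or one of the four special words, all of which lie in $\mathcal Z_{p+1}$—so every factorial appearing in every $F_k(w)$ is $p$-integral. Hence the only denominator divisible by $p$ comes from the factor $1/k$ with $p\mid k$, i.e.\ from $k=p$ alone.

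For the congruence, a factorization of a word of length $p+1$ into $p$ blocks has one block of length $2$ and $p-1$ singletons; the length-$2$ block is $\AA\AA$, $\AA\BB$ or $\BB\BB$ (weight $\tfrac12,1,\tfrac12$) and never $\BB\AA$, so
\[
F_p(w)=\tfrac12\big(n_{\AA\AA}+n_{\BB\BB}\big)+n_{\AA\BB}=\tfrac{p}{2}+\tfrac12\big(n_{\AA\BB}-n_{\BB\AA}\big),
\]
where $n_{\AA\BB}-n_{\BB\AA}$ equals $+1,-1$ or $0$ according as $(w_1,w_{p+1})=(\AA,\BB)$, $(\BB,\AA)$, or $w_1=w_{p+1}$. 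Since the $k=p$ contribution equals $\tfrac1p F_p(w)$ (as $p$ is odd), this gives $h_w=\tfrac{\varepsilon}{2p}+R_w$ with $R_w$ $p$-integral and $\varepsilon\in\{+1,-1\}$ the corresponding sign. Writing $(p+1)!\,d_{p+1}=p\,M$ with $M=(p+1)(p-1)!\,d_{p+1}$, Wilson's theorem gives $M\equiv -d_{p+1}\pmod p$, so $a_w=h_w\,(p+1)!\,d_{p+1}\equiv \varepsilon M/2\equiv \varepsilon\,\tfrac{p-1}{2}\,d_{p+1}\pmod p$. For the orientation $w_1=\AA,\,w_{p+1}=\BB$ ($\varepsilon=+1$) this is the assertion, and the reversed orientation is linked to it by the swap symmetry $h_w=(-1)^{n+1}h_{\overline w}=-h_{\overline w}$ arising from $H=-\log(\ee^{-\BB}\ee^{-\AA})$.

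It remains to prove the vanishing on $\mathcal Z_{p+1}$. For the four special words the coefficient is linear in the minority letter: the part of $H$ linear in $\AA$ is $\frac{\operatorname{ad}_\BB}{\ee^{\operatorname{ad}_\BB}-1}\AA=\sum_{m\ge0}\frac{B_m}{m!}\operatorname{ad}_\BB^m\AA$, and reading off $\AA\BB^p$ and $\BB^p\AA$ shows their coefficients to be rational multiples of $B_p$, which vanishes since $p\ge3$ is odd; the words $\AA^p\BB,\BB\AA^p$ then vanish by the $\AA\leftrightarrow\BB$ symmetry. For the words with $w_1=w_{p+1}$ I would use the generating-function form
\[
h_w=-\int_0^1\frac{P_w(-t)}{t}\,\dd t,\qquad P_w(t)=\prod_\sigma Q_\sigma(t),
\]
the product running over the maximal $\AA^*\BB^*$ segments $\sigma$ of $w$ (factorizations never cross a $\BB\AA$ step), where $\sum_{a,b\ge0}Q_{a,b}(t)\,x^ay^b=\frac{1+t(\ee^x-1)(\ee^y-1)}{(1-t(\ee^x-1))(1-t(\ee^y-1))}$. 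When $w_1=w_{p+1}$ the number of runs is odd. The prototype is $w=\AA\BB^{p-1}\AA$, for which the integral collapses to $h_w=-\coeff(y^{p-1},\Psi)$ with $\Psi(y)=\frac{\ee^y(\ee^y-1-y)}{(\ee^y-1)^2}$; the functional equation $\Psi(y)+\Psi(-y)=1$ annihilates the even-degree coefficients of $\Psi$, and $p-1$ is even, so $h_w=0$. The remaining task—and the main obstacle—is to prove the analogous reflection identity for an arbitrary product of segment factors, showing that for an odd number of segments and even total degree $p+1$ the integrand is anti-invariant under $y\mapsto-y$ (equivalently under a Möbius involution in $t$), which forces $h_w=0$. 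With this multi-segment functional equation established, the three cases together yield the corollary.
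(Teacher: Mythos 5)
Your route is genuinely different from the paper's: the paper disposes of this corollary in a few lines by citing Van Brunt and Visser \cite{VanBruntVisser} for the two structural facts ($h_w=0$ on $\mathcal{Z}_{p+1}$, and $h_w=\tfrac{1}{2p}+C_w$ with $C_w$ $p$-integral off $\mathcal{Z}_{p+1}$) and then applying Wilson's theorem exactly as you do; you instead try to reprove those facts from the defining series. Your reconstruction of the second fact is the valuable part and is essentially correct: the run-length observation confining the factor $p$ in the denominator to the $k=p$ term, and the count $F_p(w)=\tfrac{p}{2}+\tfrac12(n_{\AA\BB}-n_{\BB\AA})$, are both right. But precisely this computation yields $h_w=\tfrac{\varepsilon}{2p}+(p\text{-integral})$ with $\varepsilon=+1$ when $(w_1,w_{p+1})=(\AA,\BB)$ and $\varepsilon=-1$ when $(w_1,w_{p+1})=(\BB,\AA)$, hence $a_w\equiv\varepsilon\,\tfrac{p-1}{2}\,d_{p+1}\pmod p$, and since $p\nmid(p-1)d_{p+1}$ these two residues are distinct. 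Your closing appeal to the reversal symmetry $h_{\overline w}=-h_w$ (valid in even degree) does not repair this---it confirms the sign flip rather than removing it---so the proposal does not prove the stated congruence for the $(\BB,\AA)$ orientation. In fact it cannot: for $p=3$ one has $h_{\AA^2\BB^2}=1/24$ but $h_{\BB^2\AA^2}=-1/24$, so with denominator $4!\,d_4=24$ the numerators are $1$ and $-1\equiv 2\pmod 3$, while the corollary asserts both are $\equiv 1$. Your computation has thus exposed that the statement (and the form of the Van Brunt--Visser result quoted in the paper's proof) needs a sign depending on the orientation of $w$; you should say this outright instead of folding it back into the claimed congruence.

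The second gap is the one you acknowledge yourself: the vanishing $h_w=0$ for $w_1=w_{p+1}$ is left resting on an unproved ``multi-segment reflection identity.'' Since roughly half of all words of length $p+1$ fall into this case, this is not a loose end but the bulk of the vanishing statement; your argument via $B_p=0$ covers only the four special words. The paper sidesteps both issues by citation, so a self-contained proof along your lines must either supply that identity or likewise defer to \cite{VanBruntVisser}.
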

\begin{proof}
In \cite[Section~IV.B]{VanBruntVisser} it is shown that 
$h_w=0$ for $w\in\mathcal{Z}_{p+1}$, and that 
for $w\in\{\AA,\BB\}^{p+1}\setminus\mathcal{Z}_{p+1}$ the coefficient $h_w$  can be written as
\begin{equation*}
h_w = \frac{1}{2p} + C_w,
\end{equation*}
where $C_w$ is a rational number whose denominator is not divisible by $p$. It follows
\begin{equation*}
a_w = \left(\frac{1}{2p} + C_w\right)\, (p+1)!\,d_{p+1} 
\equiv (p-1)!\frac{p+1}{2}\,d_{p+1}
%\equiv -\frac{p+1}{2}\,d_{p+1}
\equiv \frac{p-1}{2}\,d_{p+1}
\ (\mathrm{mod} \ p).
\end{equation*}
\end{proof}

\begin{example}
\begin{table}[t ]
\centering
\begin{tabular}{rl@{\hskip 0pt}l@{\hskip 0pt}l@{\hskip 0pt}l@{\hskip 0pt}lr|rl@{\hskip 0pt}l@{\hskip 0pt}l@{\hskip 0pt}l@{\hskip 0pt}lr}
\hline
\multicolumn{1}{c}{$h_w$} &\multicolumn{5}{c}{$\mathrm{denom}(h_w)$}& \multicolumn{1}{c|}{$a_w$} &
\multicolumn{1}{c}{$h_w$} &\multicolumn{5}{c}{$\mathrm{denom}(h_w)$}& \multicolumn{1}{c}{$a_w$} \\
\hline\rule{0pt}{10pt}
$1/47900160$ &$2^9$ &$\cdot 3^5$ &$\cdot 5$   &$\cdot 7$ &$\cdot 11$ &$5$     & 
$1/739200$   &$2^7$ &$\cdot 3$   &$\cdot 5^2$ &$\cdot 7$ &$\cdot 11$ &$324$\\
$-1/4790016$ &$2^8$ &$\cdot 3^5$ &            &$\cdot 7$ &$\cdot 11$ &$-50$   &
$-13/554400$ &$2^5$ &$\cdot 3^2$ &$\cdot 5^2$ &$\cdot 7$ &$\cdot 11$ &$-5616$\\
$1/1064448$  &$2^9$ &$\cdot 3^3$ &            &$\cdot 7$ &$\cdot 11$ &$225$   &
$17/4435200$ &$2^8$ &$\cdot 3^2$ &$\cdot 5^2$ &$\cdot 7$ &$\cdot 11$ &$918$\\
$1/1247400$  &$2^3$ &$\cdot 3^4$ &$\cdot 5^2$ &$\cdot 7$ &$\cdot 11$ &$192$   &
$1/88704$    &$2^7$ &$\cdot 3^2$ &            &$\cdot 7$ &$\cdot 11$ &$2700$\\
$-1/399168$  &$2^6$ &$\cdot 3^4$ &            &$\cdot 7$ &$\cdot 11$ &$-600$  &
$-17/5322240$&$2^9$ &$\cdot 3^3$ &$\cdot 5$   &$\cdot 7$ &$\cdot 11$ &$-765$\\
$-13/6652800$&$2^7$ &$\cdot 3^3$ &$\cdot 5^2$ &$\cdot 7$ &$\cdot 11$ &$-468$  &
$1/332640$   &$2^5$ &$\cdot 3^3$ &$\cdot 5$   &$\cdot 7$ &$\cdot 11$ &$720$\\
$-1/277200$  &$2^4$ &$\cdot 3^2$ &$\cdot 5^2$ &$\cdot 7$ &$\cdot 11$ &$-864$  &
$1/3991680$  &$2^7$ &$\cdot 3^4$ &$\cdot 5$   &$\cdot 7$ &$\cdot 11$ &$60$\\
$-1/712800$  &$2^5$ &$\cdot 3^4$ &$\cdot 5^2$ &          &$\cdot 11$ &$-336$  &
$13/665280$  &$2^6$ &$\cdot 3^3$ &$\cdot 5$   &$\cdot 7$ &$\cdot 11$ &$4680$\\
$1/228096$   &$2^8$ &$\cdot 3^4$ &            &          &$\cdot 11$ &$1050$  &
$13/7983360$ &$2^8$ &$\cdot 3^4$ &$\cdot 5$   &$\cdot 7$ &$\cdot 11$ &$390$\\
$7/2851200$  &$2^7$ &$\cdot 3^4$ &$\cdot 5^2$ &          &$\cdot 11$ &$588$   &
$-1/124740$  &$2^2$ &$\cdot 3^4$ &$\cdot 5$   &$\cdot 7$ &$\cdot 11$ &$-1920$\\
$1/158400$   &$2^6$ &$\cdot 3^2$ &$\cdot 5^2$ &          &$\cdot 11$ &$1512$  &
$-1/33264$   &$2^4$ &$\cdot 3^3$ &            &$\cdot 7$ &$\cdot 11$ &$-7200$\\
$1/1900800$  &$2^8$ &$\cdot 3^3$ &$\cdot 5^2$ &          &$\cdot 11$ &$126$   &
$-1/10395$   &$3^3$ &$\cdot 5$   &            &$\cdot 7$ &$\cdot 11$ &$-23040$\\
$-1/190080$  &$2^7$ &$\cdot 3^3$ &$\cdot 5$   &          &$\cdot 11$ &$-1260$ &
$-1/73920$   &$2^6$ &$\cdot 3$   &$\cdot 5$   &$\cdot 7$ &$\cdot 11$ &$-3240$\\
$-1/887040$  &$2^8$ &$\cdot 3^2$ &$\cdot 5$   &$\cdot 7$ &$\cdot 11$ &$-270$  &
$1/27720$    &$2^3$ &$\cdot 3^2$ &$\cdot 5$   &$\cdot 7$ &$\cdot 11$ &$8640$\\
$-17/1663200$&$2^5$ &$\cdot 3^3$ &$\cdot 5^2$ &$\cdot 7$ &$\cdot 11$ &$-2448$ &
$-1/2772$    &$2^2$ &$\cdot 3^2$ &            &$\cdot 7$ &$\cdot 11$ &$-86400$\\
\hline
\end{tabular}
\caption{Possible values and factorizations of their denominators for the coefficients $h_w=\coeff(w, \log(\ee^\AA\ee^\BB))$ corresponding to words $w$
of length $n=11$. The $a_w$ are the numerators of these coefficients if written
with denominator $11!\,d_{11}=239500800$.}\label{tab:h11}
\end{table}
We consider the case $n=11$. The coefficients $h_w$ corresponding to the $2^{11}-2=2046$ words
$w\in\{\AA,\BB\}^{11}\setminus\{\AA^{11},\BB^{11}\}$
only take values from a set of 30 elements. These 30 possible values of
the coefficients are displayed in Table~\ref{tab:h11} and can be looked up in \cite{NewmanThompson}.\footnote{The fact that so many coefficients have the
same value is not a coincidence but a consequence of certain
symmetries satisfied by the coefficients, see \cite{G}.}
Also displayed are the prime factorizations of the denominators of the
coefficients. The smallest common denominator for all these coefficients
is given by the least common multiple of the denominators, which
using the factorizations is readily determined to be 
$2^9\cdot 3^5\cdot 5^2\cdot 7 \cdot 11=239500800$.
The computations
\begin{equation*}
\begin{array}{lll}
11=1\cdot 2^3+1\cdot 2^1+ 1\cdot 2^0, & s_2(11)=3, &\max\{t:2^t\leq s_2(11)\}=1, \\
11=1\cdot 3^2+2\cdot 3^0, & s_3(11)=3, &\max\{t:3^t\leq s_3(11)\}=1, \\
11=2\cdot 5^1+1\cdot 5^0, & s_5(11)=3, &\max\{t:5^t\leq s_5(11)\}=0, \\
11=1\cdot 7^1+4\cdot 5^0, & s_7(11)=5, &\max\{t:7^t\leq s_7(11)\}=0
\end{array}
\end{equation*}
result in  
$d_{11}=2\cdot 3=6$ for the value defined by (\ref{eq:d_n}). Together
with $11!=39916800$ this gives $11!\,d_{11}=239500800$ which %in this case
is indeed the smallest possible common denominator.
Furthermore, Table~\ref{tab:h11} shows the numerators $a_w$ of the coefficients 
$h_w=a_w/(11!\,d_{11})$ written with denominator $11!\,d_{11}=239500800$.
Since $n=11$ is prime, we expect 
that $a_w\equiv-d_{11}=-6\equiv 5 \ (\mathrm{mod} \ 11)$ holds by Corollary~\ref{Cor:n_prime}.  
It is  readily verified that this is indeed the case, e.g., by 
computing the alternating digit sums 
of the numerators $a_w$, as in the well-known divisibility rule for $n=11$.
\end{example}

\section{A preliminary result}\label{Sec:Preliminaries}
If some information about the denominators of the coefficients of the sub-expressions $X_1,\dots,X_K\in\QQ\langle\langle\nA\rangle\rangle$ is available, one can expect that from it 
something can be learned about the denominators of the coefficients of the compound expressions $X_1+\ldots+X_K$ and $X_1\cdots X_K$. 
The following technical lemma makes this idea concrete.
We will apply this lemma to obtain
a preliminary result  about the denominators of the coefficients of
the BCH series $H=\log(\ee^\AA\ee^\BB)$ in Proposition~\ref{Prop:D_n}, which will be the starting
point for the proof of Theorem~\ref{Thm:MainTheorem} in Section~\ref{Sec:Proof}.
\begin{lemma}\label{Lemma:denomcoeff}
%\begin{enumerate}
Let $n\geq 0$.
Let $X_i\in\QQ\langle\langle\nA\rangle\rangle$ %, $i=1,\dots,K$
and let $\delta_j(X_i)\in\mathbb{Z}_{>0}$ such that
 \begin{equation*}
\denom(\coeff(v,X_i))\ \big| \ \delta_j(X_i),\quad v\in\nA^j,\quad j=0,\dots,n,\
i=1,\dots,K.
\end{equation*} 
%for $n=0,1,\dots$.
Then for $w\in\nA^n$, we have\\
(i)
\begin{equation*}
\denom(\coeff(w,\frac{a}{b}X_i))\ | \ \frac{b\,\delta_n(X_i)}{\gcd(b\,\delta_n(X_i),a)},
\quad a\in\mathbb{Z}, \ b\in\mathbb{Z}_{>0},
\end{equation*}
%\item 
(ii)
\begin{equation*}
\denom(\coeff(w, X_1+\ldots+X_K))\ | \ \lcm\{\delta_n(X_1),\dots,\delta_n(X_K)\},
\end{equation*}
%\item
(iii)
\begin{equation*}
\denom(\coeff(w, X_1\cdots X_K))\ | \ \lcm\{\delta_{j_1}(X_1)\cdots\delta_{j_K}(X_K):\ j_i\geq 0,\ j_1+\ldots+j_K=n \}.
\end{equation*}
(iv) If the $X_i$ have no constant terms, $\coeff(1, X_i)=0$, $i=1,\dots,K$, then 
the last divisibility relation can be tightened to 
\begin{equation*}
\denom(\coeff(w, X_1\cdots X_K))\ | \ \lcm\{\delta_{j_1}(X_1)\cdots\delta_{j_K}(X_K):\ j_i\geq 1,\ j_1+\ldots+j_K=n \}.
\end{equation*}
%\end{enumerate}
\end{lemma}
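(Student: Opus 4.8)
The plan is to reduce everything to two elementary facts about rational numbers together with the convolution formula for coefficients of products in $\QQ\langle\langle\nA\rangle\rangle$. The two number-theoretic facts are: if $\denom(r)\mid d$ and $\denom(s)\mid e$, then $\denom(r+s)\mid\lcm(d,e)$ (since $d,e\mid\lcm(d,e)$, multiplying $r+s$ by $\lcm(d,e)$ clears both denominators) and $\denom(rs)\mid de$ (since $rs\cdot de=(rd)(se)\in\mathbb{Z}$). Iterating these gives $\denom(\sum_k r_k)\mid\lcm_k d_k$ and $\denom(\prod_k r_k)\mid\prod_k d_k$ whenever $\denom(r_k)\mid d_k$. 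Everything else is bookkeeping on top of these.

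For (i), I would write $r=\coeff(w,X_i)$, so that $r\,\delta_n(X_i)\in\mathbb{Z}$, i.e. $r=c/\delta_n(X_i)$ for some $c\in\mathbb{Z}$. Then $\frac{a}{b}r=\frac{ac}{b\,\delta_n(X_i)}$, whose reduced denominator is $\frac{b\,\delta_n(X_i)}{\gcd(b\,\delta_n(X_i),ac)}$. Since $\gcd(b\,\delta_n(X_i),a)$ divides both $b\,\delta_n(X_i)$ and $a$ (hence also $ac$), it divides $\gcd(b\,\delta_n(X_i),ac)$, and therefore $\frac{b\,\delta_n(X_i)}{\gcd(b\,\delta_n(X_i),ac)}\mid\frac{b\,\delta_n(X_i)}{\gcd(b\,\delta_n(X_i),a)}$, which is the claimed bound. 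Part (ii) is then immediate from the additive fact: $\coeff(w,X_1+\ldots+X_K)=\sum_i\coeff(w,X_i)$, each summand having denominator dividing $\delta_n(X_i)$, so the sum has denominator dividing $\lcm\{\delta_n(X_1),\dots,\delta_n(X_K)\}$.

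The core of the lemma is (iii), and the key input is the convolution formula: because multiplication in $\QQ\langle\langle\nA\rangle\rangle$ is concatenation of words, for $w\in\nA^n$ one has $\coeff(w,X_1\cdots X_K)=\sum_{w=u_1\cdots u_K}\prod_{i=1}^{K}\coeff(u_i,X_i)$, the sum running over all factorizations of $w$ into $K$ consecutive (possibly empty) subwords. For a fixed factorization with $|u_i|=j_i$, the iterated product rule gives $\denom\big(\prod_i\coeff(u_i,X_i)\big)\mid\prod_i\delta_{j_i}(X_i)$. Grouping the factorizations by their length profile $(j_1,\dots,j_K)$, with $j_i\geq 0$ and $\sum_i j_i=n$, and applying the additive rule across the whole sum yields $\denom(\coeff(w,X_1\cdots X_K))\mid\lcm\{\prod_i\delta_{j_i}(X_i):\ j_i\geq 0,\ \sum_i j_i=n\}$. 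The one point requiring care, which I expect to be the main (if mild) obstacle, is that many distinct factorizations share the same length profile; but since every such term already has denominator dividing the common value $\prod_i\delta_{j_i}(X_i)$, collapsing the bound to an $\lcm$ indexed by profiles rather than by individual factorizations is legitimate.

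Finally, (iv) follows by inspecting which terms of the convolution survive. If $\coeff(1,X_i)=0$ for every $i$, then any factorization in which some $u_i$ is the empty word contributes the zero factor $\coeff(1,X_i)=0$ and drops out. Hence the sum effectively ranges only over profiles with every $j_i\geq 1$, and the $\lcm$ obtained in (iii) may be restricted to such profiles, giving the tightened bound. No further work is needed beyond this observation, so the whole argument is a short deduction from the convolution formula and the two elementary denominator facts, with the only genuinely non-automatic step being the gcd manipulation in (i).
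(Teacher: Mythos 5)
Your proof is correct and follows essentially the same route as the paper: part (i) via the elementary gcd computation for $\frac{a}{b}\cdot\frac{c}{d}$, part (ii) via the lcm bound for denominators of sums, part (iii) via the convolution formula for coefficients of products grouped by length profiles, and part (iv) by noting that factorizations with an empty subword contribute zero when the $X_i$ have no constant term. The paper's proof is merely terser, stating the two elementary denominator facts without derivation.
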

\begin{proof} 
(i) follows from
\begin{equation*}
\denom\left(\frac{a}{b}\frac{c}{d}\right)\ | \ \frac{bd}{\gcd(bd, a)},\quad a,c\in\mathbb{Z},\
b,d\in\mathbb{Z}_{>0}.
\end{equation*}
(ii) follows from
\begin{equation}\label{eq:denom_sum}
\denom(r_1+\ldots+r_K)\ | \ \lcm\{\denom(r_1),\dots,\denom(r_K)\},\quad r_1,\dots,r_K\in\QQ.
\end{equation}
Ad (iii). By distributing the subwords $v^{(1)},\dots,v^{(K)}$
of all partitions $w=v^{(1)}\!\cdots v^{(K)}$ of $w$ into $K$ subwords 
of length $|v^{(i)}|\geq 0$ among 
the factors $X_1,\dots, X_K$ and summing over all such partitions we obtain
\begin{equation*}
\coeff(w, X_1\cdots X_K)=\sum_{v^{(1)}\cdots v^{(K)}=w}
\coeff(v^{(1)},X_1)%\cdot\coeff(v^{(2)},X_2)
\cdots\coeff(v^{(K)},X_K).
\end{equation*}
%It is clear that 
Each partition  $w=v^{(1)}\cdots v^{(K)}$ into $K$ subwords 
uniquely corresponds 
to a partition $n=j_1+\ldots+j_K$ of $n=|w|$
into $K$ summands $j_1,\dots,j_K\geq 0$,  where the correspondence is 
given by
$(v^{(1)},\dots,v^{(K)})\mapsto(j_1,\dots,j_K)=(|v^{(1)}|,\dots,|v^{(K)}|)$. 
Using (\ref{eq:denom_sum}) and 
$$\denom(r_1\cdots r_K)\,|\,\denom(r_1)\cdots\denom(r_K),\quad r_1,\dots,r_K\in\QQ$$ 
it follows 
\begin{equation*}
\denom(\coeff(w, X_1\cdots X_K))\ | \ \lcm\{\delta_{j_1}(X_1)\cdots\delta_{j_K}(X_K):\ j_i\geq 0,\ j_1+\ldots+j_K=n \}.
\end{equation*}
The proof of (iv) is the same as the one of (iii) except that now 
only partitions $w=v^{(1)}\cdots v^{(K)}$ into subwords of
length $|v^{(1)}|\geq 1$ have to be considered.
Such partitions now correspond to partitions $n=j_1+\ldots+j_K$ into summands 
$j_1,\dots,j_K\geq 1$.
\end{proof}
\begin{proposition}\label{Prop:D_n}
Define
\begin{equation}\label{eq:D_n}
D_n = \lcm\{k\, j_1!\cdots j_k!:\ j_i\geq 1, \ j_1+\ldots+j_k=n,\ k=1,\dots,n\},\quad
n=1,2,\dots.
\end{equation}
Then 
\begin{equation*}
\denom(\coeff(w,\log(\ee^{\AA_1}\!\cdots\ee^{\AA_K})))\ | \  %\lcm\{k\cdot j_1!\cdots j_K!: j_1+\ldots+j_k=n,\ k=1,\dots,n\}
D_n,\quad w\in\nA^n,\ \nA=\{\AA_1,\dots,\AA_k\}.
\end{equation*}
\end{proposition}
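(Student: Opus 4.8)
The plan is to expand $\log(\ee^{\AA_1}\cdots\ee^{\AA_K})$ using the series $\log(1+z)=\sum_{k\geq1}\frac{(-1)^{k+1}}{k}z^k$ with $z=\ee^{\AA_1}\cdots\ee^{\AA_K}-1$, and then feed this layered expression through Lemma~\ref{Lemma:denomcoeff} to track denominators degree by degree. The key observation is that each exponential $\ee^{\AA_i}=\sum_{j\geq0}\frac{1}{j!}\AA_i^j$ has the property that $\denom(\coeff(v,\ee^{\AA_i}))\mid j!$ for any word $v$ of length $j$ (indeed the only nonzero coefficient at degree $j$ is that of $\AA_i^j$, equal to $1/j!$). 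Thus for the single exponential we may take $\delta_j(\ee^{\AA_i})=j!$.

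First I would compute a denominator bound for the product $P=\ee^{\AA_1}\cdots\ee^{\AA_K}$. Applying Lemma~\ref{Lemma:denomcoeff}(iii) with $\delta_{j_i}(\ee^{\AA_i})=j_i!$ gives
\begin{equation*}
\denom(\coeff(w,P))\ \big|\ \lcm\{\,j_1!\cdots j_K!:\ j_i\geq0,\ j_1+\ldots+j_K=m\,\}
\end{equation*}
for $w$ of length $m$. Next, since the constant term of $P-1$ is zero, I would set $Y=P-1$ and apply part~(iv) repeatedly to bound the coefficients of the powers $Y^k$. A clean way to organize this is to define $\delta_j(Y)$ as a common denominator valid at degree $j$ for $Y$ (for $j\geq1$ this equals the bound for $P$ just obtained, and $\delta_0(Y)$ is irrelevant since the constant term vanishes), and then to use~(iv) to get, for a word of length $n$,
\begin{equation*}
\denom(\coeff(w,Y^k))\ \big|\ \lcm\{\,\delta_{i_1}(Y)\cdots\delta_{i_k}(Y):\ i_\ell\geq1,\ i_1+\ldots+i_k=n\,\}.
\end{equation*}

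Then I would handle the outer factor $\frac{(-1)^{k+1}}{k}$ using part~(i): multiplying $Y^k$ by $1/k$ multiplies the denominator bound by at most $k$ (the sign is irrelevant). Combining with part~(ii) to sum over $k$, and noting that only $k\leq n$ contributes at degree $n$ (since each factor $Y$ carries degree at least $1$), I would obtain
\begin{equation*}
\denom(\coeff(w,\log P))\ \big|\ \lcm\{\,k\,\delta_{i_1}(Y)\cdots\delta_{i_k}(Y):\ i_\ell\geq1,\ \textstyle\sum i_\ell=n,\ k=1,\dots,n\,\}.
\end{equation*}
Unfolding each $\delta_{i_\ell}(Y)$ back into its defining lcm over compositions of $i_\ell$ into nonnegative parts with factorial products, and merging all these nested partitions into a single composition of $n$, the whole bound collapses to an lcm over terms of the shape $k\,j_1!\cdots j_k!$ with $j_1+\ldots+j_k=n$, $j_i\geq1$, which is exactly $D_n$.

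The main obstacle I expect is the bookkeeping in the last collapse: showing that the nested lcm arising from $\delta_{i_\ell}(Y)$ (itself an lcm over sub-compositions with parts allowed to be zero) recombines into precisely the single-level lcm~\eqref{eq:D_n} with all parts $\geq1$. The subtlety is that within each power $Y^k$ the outer parts $i_\ell$ are forced to be $\geq1$, but each $i_\ell$ then splits into $K$ inner parts that may be zero; one must check that after flattening, every resulting factorial product $j_1!\cdots j_m!$ with all $j_r\geq1$ and $\sum j_r=n$ is attained for some admissible choice, and conversely that no larger terms creep in. Since $\lcm$ is monotone and each contributing term is dominated by one of the form in~\eqref{eq:D_n}, divisibility in both directions follows, giving the claimed bound $D_n$.
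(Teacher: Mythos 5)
Your argument is correct and follows essentially the same route as the paper: bound each exponential by $\delta_j(\ee^{\AA_i})=j!$, then apply parts (iii), (iv), (i) and (ii) of Lemma~\ref{Lemma:denomcoeff} to $Y=\ee^{\AA_1}\!\cdots\ee^{\AA_K}-1$ and to the logarithm series truncated at $k=n$. The only difference is that the paper sidesteps your final collapse step entirely by observing that $\lcm\{j_1!\cdots j_K!:\ j_i\geq 0,\ j_1+\ldots+j_K=m\}=m!$ (each product divides $m!$ by integrality of the multinomial coefficients, and $m!$ itself is attained), so one may take $\delta_m(Y)=m!$ and the bound for $\frac{(-1)^{k+1}}{k}Y^k$ is already literally of the form $\lcm\{k\,j_1!\cdots j_k!\}$ appearing in $D_n$, with no nested lcm left to flatten.
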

\begin{proof}
Because $\coeff(w, \ee^{\AA_i})\in\{0,1/n!\}$ for $w\in\nA^n$ it is clear that
\begin{equation*}
\denom(\coeff(w,\ee^{\AA_i}))\ | \ n!,\quad w\in\nA^n, \ i=1,\dots,K, \ n=0,1,2,\dots,
\end{equation*}
which
using Lemma~\ref{Lemma:denomcoeff}~(iii) implies
\begin{equation*}
\denom(\coeff(w,\ee^{\AA_1}\!\cdots\ee^{\AA_K}))\ | \
\lcm\{j_1!\cdots j_K!:\ j_i\geq0, \ j_1+\ldots+j_K=n\} \ = \ n!\,. %,\quad w\in\nA^n.
\end{equation*}
We set
$Y = \ee^{\AA_1}\!\cdots\ee^{\AA_K}-1$
so that $\coeff(1, Y)=0$ and
\begin{equation*}
%\denom(\coeff(w,\ee^{\AA_1}\cdots\ee^{\AA_K}-1))
\denom(\coeff(w,Y))
\ | \ n!, \quad w\in\nA^n,
\ n=1,2,\dots,
\end{equation*}
which 
using Lemma~\ref{Lemma:denomcoeff}~(i),~(iv) implies %we obtain for $w\in\nA^n$,
\begin{equation*}
%\denom\left(\coeff\left(w,\frac{(-1)^{k+1}}{k}(\ee^{\AA_1}\cdots\ee^{\AA_K}-1)^k\right)\right)
\denom\left(\coeff\left(w,\frac{(-1)^{k+1}}{k}Y^k\right)\right)
\ | \ \lcm\{k\,j_1!\cdots j_k!:\ j_i\geq 1, \ j_1+\ldots+j_k=n\}
% ,\ k=1,\dots,n
 %, \quad w\in\nA^n,
%\ n=0,1,2,\dots.
\end{equation*}
for $k=1,\dots,n$ and $w\in\nA^n$.
Because $\coeff(w,Y^k)=0$ for $k>|w|$ we have
\begin{equation*}
%\coeff(w,\log(\ee^{\AA_1}\cdots\ee^{\AA_K}))
%=\coeff\left(w,\ \sum_{k=1}^{n}\frac{(-1)^{k+1}}{k}(\ee^{\AA_1}\cdots\ee^{\AA_K}-1)^k\right), \quad |w|<n
\coeff(w,\log(1+Y))
=\coeff\left(w,\ \sum_{k=1}^{n}\frac{(-1)^{k+1}}{k}Y^k\right), \quad |w|\leq n,
\end{equation*}
from which 
\begin{equation*}
\denom(\coeff(w,\log(1+Y)))\, |\, 
\lcm\{k\,j_1!\cdots j_k!:\, j_i\geq 1, \, j_1+\ldots+j_k=n,\, k=1,\dots,n\}
\end{equation*}
follows for $w\in\nA^n$, $n=1,2,\dots$  by an application of  Lemma~\ref{Lemma:denomcoeff}~(ii).
\end{proof}
 \section{Proof of Theorem~\ref{Thm:MainTheorem}}\label{Sec:Proof}
For the proof of Theorem~\ref{Thm:MainTheorem} we will show that 
\begin{equation}\label{eq:d_eq_D}
 n!\,d_n = D_n,
\end{equation}
where $d_n$ is defined by (\ref{eq:d_n}) and $D_n$ is defined by (\ref{eq:D_n}).
Then, Theorem~\ref{Thm:MainTheorem} will be an immediate consequence of Proposition~\ref{Prop:D_n}.

For a prime $p\geq 2$  the  {\em $p$-adic valuation} $v_p(n)$ of $n$ is
defined as the exponent of the highest power of $p$ that divides $n$.
The function $v_p$ satisfies 
\begin{equation*}
v_p(n\cdot m) = v_p(n)+v_p(m),
\end{equation*}
which implies that (\ref{eq:d_eq_D}) is equivalent to 
\begin{equation*}
v_p(n!)+ v_p(d_n) = v_p(D_n) 
\quad \mbox{for all primes}\ p\geq 2,
\end{equation*}
where $v_p(d_n)=\max\{t:\ p^t \leq s_p(n)\}$.
Here and in the following $s_p(n)=\alpha_0+\ldots+\alpha_r$ is the sum of 
digits in the 
{\em $p$-adic expansion}  $n=\alpha_0+\alpha_1p+\dots+\alpha_rp^r$.
To compute $v_p(D_n)$ we need some further properties of the function
$v_p$. 

For nonempty finite subsets $\mathcal{M}\subset\mathbb{Z}_{\geq 0}$
we have
\begin{equation*}
v_p(\lcm\,\mathcal{M}) = \max_{m\in\mathcal{M}}v_p(m)
\end{equation*}
and, by convention, $\lcm(\emptyset)=1$ such that $v_p(\lcm(\emptyset))=0$.

For the computation of $v_p$ for factorials we have {\em Legendre's formula}
\begin{equation*}
v_p(n!)=\frac{n-s_p(n)}{p-1},
\end{equation*}
see, e.g., \cite{Mihet}.
% which implies
%\begin{equation}
%v_p(n) = v_p(n)-v_p(n-1) = \frac{s_p(n-1)-s_p(n)+1}{p-1}.
%\end{equation}

Now, let $j_1,\dots,j_k\geq 1$ with $j_1+\ldots+j_k=n$. Then
\begin{align*}
v_p(k\,j_1!\cdots j_k!) &= 
v_p(k)+v_p(j_1!)+\ldots+v_p(j_k!)\\
&=v_p(k)+\frac{1}{p-1}\big(n-s_p(j_1)-\ldots-s_p(j_k)\big)\\
&=v_p(n!)+v_p(k)-\frac{1}{1-p}\big(s_p(j_1)+\ldots+s_p(j_k)-s_p(n)\big).
%\frac{1}{p-1}\big(s_p(k-1)-s_p(k)-1 + n-(s_p(j_1)+\ldots+s_p(j_k)) \big)\\
%&=v_p(n!)+\frac{1}{p-1}\big(s_p(k-1)-s_p(k)-1 + s_p(n)-(s_p(j_1)+\ldots+s_p(j_k)) \big).
\end{align*}
It follows 
\begin{align}
v_p(D_n) &= \max_{k=1,\dots,n}\,\max_{j_i\geq 1,\, j_1+\ldots+j_k=n}v_p(k\,j_1!\cdots j_k!)\nonumber\\
&=v_p(n!)+\max_{k=1,\dots,n}\big((v_p(k)-h_p(n,k)\big), 
\label{eq:vp_D}
\end{align}
where
\begin{equation*}
h_p(n,k)=\frac{1}{p-1}\min_{j_i\geq 1,\, j_1+\ldots+j_k=n}\big(s_p(j_1)+\ldots+s_p(j_k)-s_p(n)\big).
\end{equation*}
To complete the computation of $v_p(D_n)$ we need
some properties of the function $h_p(n,k)$ which follow from the following
two lemmas.
\begin{lemma}\label{Lemma:min_h_1}
If $k\leq s_p(n)$, then
\begin{equation*}
\min_{j_i\geq 1,\ j_1+\ldots+j_k=n}\big(s_p(j_1)+\ldots+s_p(j_k)-s_p(n)\big)=0.
\end{equation*}
\end{lemma}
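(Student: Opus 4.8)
The plan is to prove two things: that the quantity $s_p(j_1)+\ldots+s_p(j_k)-s_p(n)$ is always nonnegative, so that the minimum is at least $0$, and that under the hypothesis $k\le s_p(n)$ this lower bound is actually attained by a suitable partition. The nonnegativity is the subadditivity of the digit-sum function: for any $j_1,\ldots,j_k$ with $j_1+\ldots+j_k=n$ one has $s_p(j_1)+\ldots+s_p(j_k)\ge s_p(n)$. I would deduce this directly from Legendre's formula together with the integrality of the multinomial coefficient $n!/(j_1!\cdots j_k!)$: the latter gives $v_p(n!)\ge v_p(j_1!)+\ldots+v_p(j_k!)$, and substituting $v_p(m!)=(m-s_p(m))/(p-1)$ and using $j_1+\ldots+j_k=n$ yields exactly the claimed inequality.

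For attainment, the key idea is to regard $n$ as a collection of $s_p(n)$ indivisible ``digit atoms''. Writing the $p$-adic expansion $n=\alpha_0+\alpha_1 p+\ldots+\alpha_r p^r$, I would introduce, for each $i$, exactly $\alpha_i$ copies of the atom $p^i$; the total number of atoms is $\alpha_0+\ldots+\alpha_r=s_p(n)$ and their sum is $n$. Since by hypothesis $s_p(n)\ge k\ge 1$, these atoms can be partitioned into $k$ nonempty groups $G_1,\ldots,G_k$. Setting $j_\ell=\sum_{a\in G_\ell}a$ gives a partition $n=j_1+\ldots+j_k$ with each $j_\ell\ge 1$.

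It remains to check that this partition achieves $s_p(j_1)+\ldots+s_p(j_k)=s_p(n)$, which is the only nontrivial point. Writing $j_\ell=\sum_i c_{\ell,i}\,p^i$, where $c_{\ell,i}$ counts the copies of $p^i$ assigned to $G_\ell$, one has $\sum_\ell c_{\ell,i}=\alpha_i\le p-1$ for every position $i$; hence each $c_{\ell,i}\le p-1$ is already a legitimate base-$p$ digit of $j_\ell$, no carries occur, and $s_p(j_\ell)=\sum_i c_{\ell,i}=|G_\ell|$. Summing over $\ell$ gives $s_p(j_1)+\ldots+s_p(j_k)=|G_1|+\ldots+|G_k|=s_p(n)$, so the expression vanishes for this partition and the minimum equals $0$.

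The only real obstacle is the verification in the last paragraph, namely seeing that the group sums are ``carry-free''. This hinges entirely on the bound $\alpha_i\le p-1$: because the atoms sitting at a fixed position are never numerous enough in total to force a carry, no subset of them can either, and atoms at distinct positions do not interact. Once this is observed the result is immediate.
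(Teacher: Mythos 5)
Your proof is correct and follows essentially the same route as the paper: the lower bound comes from the integrality of the multinomial coefficient $\binom{n}{j_1,\dots,j_k}$ via Legendre's formula, and attainment comes from a carry-free distribution of the $s_p(n)$ base-$p$ ``digit atoms'' of $n$ among the $k$ parts. The only difference is presentational --- the paper exhibits one specific such distribution (single atoms for $j_1,\dots,j_{k-1}$ and the remaining tail for $j_k$, which makes the digit sums immediate), whereas you allow an arbitrary partition of the atoms into $k$ nonempty groups and justify the absence of carries from $\alpha_i\le p-1$; both verifications are sound.
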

\begin{proof}
With the multinomial coefficient 
$\genfrac(){0pt}{}{n}{j_1,\dots,j_k}=\frac{n!}{j_1!\cdots j_k!}$
%$\left(n \atop j_1,\dots,j_k\right)=\frac{n!}{j_1!\cdots j_k!}$ 
we have
\begin{equation}\label{eq:v_multinom}
\frac{1}{p-1}\big(s_p(j_1)+\ldots+s_p(j_k)-s_p(n)\big)=v_p
\left(
\genfrac(){0pt}{}{n}{j_1,\dots,j_k}
%\left(n \atop j_1,\dots,j_k\right)
\right)\geq 0,
\end{equation}
and thus
\begin{equation*}
s_p(j_1)+\ldots+s_p(j_k)-s_p(n)\geq 0
\end{equation*}
for all $j_1,\dots, j_k\geq 1$ with $j_1+\ldots + j_k=n$.

Using the assumption $k\leq s_p(n)$ we now construct an assignment of the variables $j_1,\dots,j_k\geq 1$ for which 
$j_1+\ldots+j_k=n$ and $s_p(j_1)+\ldots+s_p(j_k)=s_p(n)$ hold.
 The existence of such
an assignment suffices to prove the lemma.

Corresponding to  the $p$-adic expansion
\begin{equation*}
n=\alpha_0+\alpha_1p+\dots+\alpha_rp^r
\end{equation*}
let $x$ ($0\leq x\leq r)$ be uniquely defined by the inequalities
\begin{equation*}
\alpha_0+\ldots+\alpha_{x-1}\leq k-1<
\alpha_0+\ldots+\alpha_x,
\end{equation*}
and let  $y$ ($0\leq y < \alpha_x$) be defined by by the equation
\begin{equation*}
k-1 = \alpha_0+\ldots+\alpha_{x-1}+y.
\end{equation*}
Note that here for the  existence of $x$ the requirement 
$k-1<s_p(n)=\alpha_0+\ldots+\alpha_r$ is necessary.
Define $j_1,\dots,j_{k-1}$ by
\begin{align*}
j_i=1,&\quad i=1,\,\dots,\,\alpha_0,\\
j_i=p,&\quad i=\alpha_0+1,\,\dots,\,\alpha_0+\alpha_1,\\
j_i=p^2,&\quad i=\alpha_0+\alpha_1+1,\,\dots,\,\alpha_0+\alpha_1+\alpha_2,\\
\vdots\quad\ \ &\quad \quad\vdots \\
j_i=p^{x-1},&\quad i=\alpha_0+\ldots+\alpha_{x-2}+1,\,\dots,\,\alpha_0+\ldots+\alpha_{x-2}+\alpha_{x-1}, \\
j_i=p^{x},&\quad i=\alpha_0+\ldots+\alpha_{x-1}+1,\,\dots,\,\alpha_0+\ldots+\alpha_{x-1}+y=k-1,
\end{align*}
and $j_k$ by
\begin{equation*}
j_k = (\alpha_x-y)p^x+\alpha_{x+1}p^{x+1}+\ldots+\alpha_rp^r.
\end{equation*}
Then $s_p(j_i)=1$, $i=1,\dots,k-1$ and $s_p(j_k)=(\alpha_x-y)+\alpha_{x-1}+\ldots+\alpha_r$, and thus
\begin{align*}
s_p(j_1)+\dots+s_p(j_{k-1})+s_p(j_k) 
&= k-1
\, +\, \alpha_x-y+\alpha_{x+1}+\ldots+\alpha_r \\
&= \alpha_0+\ldots+\alpha_{x-1}+y
\, + \, \alpha_x-y+\alpha_{x+1}+\ldots+\alpha_r \\
&= \alpha_0+\ldots+\alpha_r = s_p(n).
\end{align*}
Similarly, it is  easy to check that $j_1+\ldots+j_k=n$, and it is clear
that $j_i,\dots,j_k\geq 1$ (for $j_k$ this follows from $y<\alpha_x$).

\end{proof}

\begin{lemma}\label{Lemma:min_h_2}
For $n\geq  1$ let $l=\max\{t:\ p^t \leq s_p(n)\}$ such that $p^l\leq s_p(n)<p^{l+1}$, and let $k=p^{l+m}x> s_p(n)$ with 
$m\geq 1$ and $x\geq 1$.
Then
\begin{equation}\label{eq:geq_m}
\frac{1}{p-1}\big(s_p(j_1)+\ldots+s_p(j_k)-s_p(n)\big)\geq m
\end{equation}
for all $j_1,\dots, j_k\geq 1$ with $j_1+\ldots + j_k=n$.
\end{lemma}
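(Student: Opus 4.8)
The plan is to recognize the left-hand side of (\ref{eq:geq_m}) as a $p$-adic valuation of a multinomial coefficient and then bound it from below using only the weak hypotheses $j_i\geq 1$ and $k\geq p^{l+m}$. By (\ref{eq:v_multinom}), for any admissible $j_1,\dots,j_k$ the quantity on the left equals $v_p\big(\genfrac(){0pt}{}{n}{j_1,\dots,j_k}\big)$, which in particular is a \emph{nonnegative integer}. This integrality is the feature I would exploit at the very end, and everything is arranged so as to reduce to it.

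First I would establish a clean lower bound that ignores the fine structure of the $j_i$. Since each $j_i\geq 1$ we have $s_p(j_i)\geq 1$, hence $s_p(j_1)+\ldots+s_p(j_k)\geq k$, and therefore
\[
\frac{1}{p-1}\big(s_p(j_1)+\ldots+s_p(j_k)-s_p(n)\big)\ \geq\ \frac{k-s_p(n)}{p-1}.
\]
It then remains to show that the right-hand side forces the integer on the left to be at least $m$.

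The second step is a purely arithmetic estimate of $k-s_p(n)$. Using $k=p^{l+m}x\geq p^{l+m}$ together with $s_p(n)\leq p^{l+1}-1$ (from $s_p(n)<p^{l+1}$), I obtain
\[
k-s_p(n)\ \geq\ p^{l+m}-p^{l+1}+1\ =\ p^{l+1}\big(p^{m-1}-1\big)+1\ \geq\ (m-1)(p-1)+1,
\]
where the last inequality uses $p^{l+1}\geq 1$ and the Bernoulli-type bound $p^{m-1}-1=(1+(p-1))^{m-1}-1\geq (m-1)(p-1)$. Consequently $\tfrac{k-s_p(n)}{p-1}>m-1$, so the left-hand side of (\ref{eq:geq_m}) is a nonnegative integer strictly exceeding $m-1$, whence it is $\geq m$.

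The one point that needs care — and the only place where a naive real-valued estimate fails — is that the bound $(k-s_p(n))/(p-1)$ is in general strictly \emph{less} than $m$ (already for $m=1$ it is only positive), so the conclusion cannot be read off directly. The argument genuinely relies on combining the strict inequality $>m-1$ with the integrality guaranteed by (\ref{eq:v_multinom}). Everything else is a short computation, and the hypotheses enter only through $k\geq p^{l+m}$ and $p^l\leq s_p(n)<p^{l+1}$; the precise multiplicative form $k=p^{l+m}x$ (and the condition $k>s_p(n)$, which it implies) is not otherwise needed for this lemma.
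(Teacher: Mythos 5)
Your proof is correct and follows essentially the same route as the paper: bound the sum below using $s_p(j_i)\geq 1$ and $s_p(n)<p^{l+1}$, show the resulting quantity strictly exceeds $m-1$, and conclude $\geq m$ from the integrality supplied by (\ref{eq:v_multinom}). The only difference is cosmetic — you invoke Bernoulli's inequality where the paper writes $\frac{p^{m-1}-1}{p-1}=p^{m-2}+\ldots+1\geq m-1$ — and your closing observation that only $k\geq p^{l+m}$ is actually used matches what the paper's argument implicitly does.
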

\begin{proof}
We have
\begin{align*}
\frac{1}{p-1}\big(s_p(j_1)+\ldots+s_p(j_k)-s_p(n)\big)
&> \frac{1}{p-1}\big(p^{l+m}x-p^{l+1}\big)\\
&\geq \frac{p^{l+1}}{p-1}\big(p^{m-1}-1\big)\\
&=p^{l+1}\big(p^{m-2}+p^{m-3}+\ldots+1\big)\\
&\geq 2(m-1)\geq m-1.
\end{align*}
(Here, if $m=1$,  the sum $p^{m-2}+p^{m-3}+\ldots+1$
in the next-to-last row is unterstood to be $=0$.)
Since $\big(s_p(j_1)+\ldots+s_p(j_k)-s_p(n)\big)/(p-1)$ is an integer according to (\ref{eq:v_multinom}), this implies
(\ref{eq:geq_m}).
\end{proof}
We are now in the position to complete the computation (\ref{eq:vp_D}) of $v_p(D_n)$. 
%and the proof of Theorem~\ref{Thm:MainTheorem}.
Let $l=v_p(d_n)=\max\{t:\ p^t \leq s_p(n)\}$ such that $p^l \leq s_p(n)<p^{l+1}$.
For $k\in\{1,\dots,n\}$ we have the following 3 mutually exclusive possibilities: 
\begin{enumerate}[label=(\roman*), nosep]
\item If $k\leq s_p(n)$, then $v_p(k)=l$ and $h_p(n,k)=0$ by Lemma~\ref{Lemma:min_h_1}; thus $v_p(k)-h_p(n,k)=l$.
\item If $k=p^{l+m}x>s_p(n)$, $m\geq 1$, $x\geq 1$, $p\nmid x$, then $v_p(k)=l+m$ and
$h_p(n,k)\geq m$ by Lemma~\ref{Lemma:min_h_2}; thus $v_p(k)-h_p(n,k)\leq l$.
\item If $k=p^tx>s_p(n)$, $t\leq l$, $x\geq 1$, $p\nmid x$, then
$v_p(k)=t\leq l$ and $h_p(n,k)\geq 0$; thus $v_p(k)-h_p(n,k)\leq l$.
\end{enumerate}
Altogether this implies
\begin{equation*}
v_p(D_n) =v_p(n!)+\max_{k=1,\dots,n}\big((v_p(k)-h_p(n,k)\big)
=v_p(n!)+l= v_p(n!)+ v_p(d_n)
\end{equation*}
for all primes $p\geq 2$, which as already mentioned is equivalent to (\ref{eq:d_eq_D}), and thus completes the proof of Theorem~\ref{Thm:MainTheorem}.

%*************************************
%
%Your paper starts here with your first section.
%
%
%Some notes:
%
%Please denote the start of your proofs by \begin{proof} and end them with \end{proof}
%
%For references, please list them alphabetically by first author's last name and have the labels be numeric.
%
%
%For article references, please follow the following format:
%
%J. Deer and K. Doe, On the history of mathematics, {\it J. of the World} {\bf 52} (1999), 123-135.
%
%
%For book references, please follow the following format:
%
%A. Jones, L. Smith, and C. Vector, {\it The Theory of Everything}, Publishing Company, New York, 1987.
%
%
%For article titles, only the first word and proper nouns should be capitalized. For book titles, all words, except articles, prepositions, and conjunctions, should be capitalized.
%
%
%Please use the AMS abbreviations for journals (see this link).
%
%It is important that your references be part of your paper and not in a separate file. The Reference section should be the last section, unless there are appendices (which come after the reference section).
%
%Your reference section should be of the following form.

\end{document}